 \newtheorem{theorem}{Theorem}
\theoremstyle{definition}
\theoremstyle{remark}
\begin{document}
\title{Holomorphic Hermite functions and ellipses}
\author[H.~Chihara]{Hiroyuki Chihara}
\address{Department of Mathematics, Faculty of Education, University of the Ryukyus, Nishihara, Okinawa 903-0213, Japan}
\email{hiroyuki.chihara@gmail.com}
\thanks{Supported by the JSPS Grant-in-Aid for Scientific Research \#16K05221.}
\subjclass[2000]{Primary 33C45; Secondary 46E20, 46E22, 35S30}
\keywords{Bargmann transform, Segal-Bargmann spaces, holomorphic Hermite functions}
\begin{abstract}
In 1990 van Eijndhoven and Meyers introduced systems of holomorphic Hermite functions 
and reproducing kernel Hilbert spaces associated with the systems on the complex plane. 
Moreover they studied the relationship between the family of all their Hilbert spaces 
and a class of Gelfand-Shilov functions. 
After that, their systems of holomorphic Hermite functions have been applied 
to studying quantization on the complex plane, combinatorics, and etc. 
On the other hand, 
the author recently introduced systems of holomorphic Hermite functions 
associated with ellipses on the complex plane. 
The present paper shows that their systems of holomorphic Hermite functions are 
determined by some cases of ellipses, 
and that their reproducing kernel Hilbert spaces are 
some cases of the Segal-Bargmann spaces determined by 
the Bargmann-type transforms introduced by Sj\"ostrand. 
\end{abstract}
\maketitle
\section{Introduction}
\label{section:introduction}
Let $0<s<1$. 
We denote by $\mathscr{X}_s(\mathbb{C})$ 
the set of all holomorphic functions on $\mathbb{C}$ 
satisfying an integrability condition 
$$
\lVert\varphi\rVert_s^2
=
\int_\mathbb{C}
\lvert\varphi(z)\rvert^2
\exp\left(
    -\frac{1-s^2}{2s}\lvert{z}\rvert^2
    +
    \frac{1+s^2}{4s}
    (z^2+\bar{z}^2)
    \right)
L(dz)
<
\infty, 
$$
where $L(dz)$ is the Lebesgue measure on $\mathbb{C}\simeq\mathbb{R}^2$. 
The function space $\mathscr{X}_s(\mathbb{C})$ is a Hilbert space 
equipped with an inner product
$$
(\varphi,\psi)_s
=
\int_\mathbb{C}
\varphi(z)
\overline{\psi(z)}
\exp\left(
    -\frac{1-s^2}{2s}\lvert{z}\rvert^2
    +
    \frac{1+s^2}{4s}
    (z^2+\bar{z}^2)
    \right)
L(dz),
\quad
\varphi,\psi\in\mathscr{X}_s(\mathbb{C}).
$$
The quantity $\lVert\cdot\rVert_s$ coincides with the norm induced by the inner product. 
In \cite{EM} van Eijndhoven and Meyers first introduced 
the function space $\mathscr{X}_s(\mathbb{C})$ 
and 
holomorphic Hermite functions $\psi^s_n$ defined by 
$$
\psi^s_n(z)
=
b_{nn}(s)^{-1/2}
e^{-z^2/2}
H_n(z), 
\quad
n=0,1,2,\dotsc, 
$$
$$
b_{mn}(s)
=
\frac{\pi\sqrt{s}}{1-s}
2^n
\left(\frac{1+s}{1-s}\right)^n
n!
\delta_{mn}, 
\quad
m,n=0,1,2,\dotsc. 
$$ 
$$
H_n(z)
=
(-1)^n
e^{z^2}
\left(\frac{d}{dz}\right)^n
e^{-z^2},
\quad
n=0,1,2,\dotsc. 
$$
They proved the following properties. 
\begin{theorem}[van Eijndhoven and Meyers \cite{EM}] 
\label{theorem:EM}
\quad
\begin{itemize}
\item 
$\mathscr{X}_s(\mathbb{C})$ 
is a reproducing kernel Hilbert space with a reproducing formula 
$$
\varphi(z)
=
\int_\mathbb{C}
K_s(z,\zeta)
\varphi(\zeta)
\exp\left(
    -\frac{1-s^2}{2s}\lvert{\zeta}\rvert^2
    +
    \frac{1+s^2}{4s}
    (\zeta^2+\bar{\zeta}^2)
    \right)
L(d\zeta)
$$
for $z\in\mathbb{C}$ and $\phi\in\mathscr{X}_s(\mathbb{C})$, 
where the integral kernel is given by 
$$
K_s(z,\zeta)
=
\frac{1-s^2}{2\pi s}
\exp\left(
    \frac{1-s^2}{2s}z\bar{\zeta}
    -
    \frac{1+s^2}{4s}
    (z^2+\bar{\zeta}^2)
    \right).
$$
\item 
$\{\psi^s_n\}_{n=0}^\infty$ 
is a complete orthonormal system of $\mathscr{X}_s(\mathbb{C})$. 
\item 
$\displaystyle\bigcup_{0<s<1}\mathscr{X}_s(\mathbb{C})=S^{1/2}_{1/2}$, 
where $S^{1/2}_{1/2}$ is a Gelfand-Shilov space extended on $\mathbb{C}$ 
{\rm (}See {\rm \cite{GS})}. 
\end{itemize}
\end{theorem}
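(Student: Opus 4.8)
The plan is to handle the three bullets in the order orthonormality, reproducing kernel, Gelfand--Shilov identification, the first being the computational engine for everything else. For the second bullet I would not integrate $\psi^s_m$ against $\psi^s_n$ term by term but instead contract the whole family against the Hermite generating function $\sum_n H_n(z)t^n/n! = e^{2zt-t^2}$. Since the Hermite polynomials have real coefficients, $\overline{\psi^s_n(z)} = b_{nn}(s)^{-1/2}e^{-\bar z^2/2}H_n(\bar z)$, so feeding generating functions in two variables $u,v$ into $(\cdot,\cdot)_s$ reduces the entire Gram matrix to the single integral $\int_{\mathbb C}e^{2zu-u^2}e^{2\bar z v-v^2}e^{-(z^2+\bar z^2)/2}w_s(z)\,L(dz)$, where $w_s(z)=\exp(-\frac{1-s^2}{2s}|z|^2+\frac{1+s^2}{4s}(z^2+\bar z^2))$ is the defining weight of $\|\cdot\|_s$. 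Passing to $z=x+iy$, the quadratic part of the exponent becomes $-(1-s)x^2-\frac{1-s}{s}y^2$, which is negative definite for $0<s<1$, so the Gaussian integral converges and evaluates in closed form. The decisive point is that after completing the square the surviving exponent is proportional to $uv$ alone: the value is $\frac{\pi\sqrt s}{1-s}\exp(\frac{2(1+s)}{1-s}uv)$, with no $u^2$ or $v^2$ terms. Matching coefficients of $u^mv^n$ then simultaneously forces $m=n$ (orthogonality) and returns exactly $b_{nn}(s)$ (normalization), so $\{\psi^s_n\}$ is orthonormal.

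For the first bullet I would first note that $\mathscr X_s(\mathbb C)$ is a reproducing kernel Hilbert space for soft reasons: the weight is locally bounded below, so the sub-mean-value property for holomorphic functions bounds each point evaluation by the $\|\cdot\|_s$-norm, and a reproducing kernel exists abstractly. To exhibit it in closed form I would sum $\sum_n\psi^s_n(z)\overline{\psi^s_n(\zeta)}$ by Mehler's formula with parameter $w=\frac{1-s}{1+s}$; the normalizing factors collapse to $\frac{1-s^2}{2\pi s}$, and after absorbing $e^{-z^2/2-\bar\zeta^2/2}$ the exponent rearranges precisely into $\frac{1-s^2}{2s}z\bar\zeta-\frac{1+s^2}{4s}(z^2+\bar\zeta^2)$, reproducing the closed form of $K_s(z,\zeta)$. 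To upgrade this to the reproducing formula I would verify the identity $\int_{\mathbb C}K_s(z,\zeta)\varphi(\zeta)w_s(\zeta)\,L(d\zeta)=\varphi(z)$ directly on the total family of test functions $\varphi(\zeta)=e^{-\zeta^2/2+\lambda\zeta}$, $\lambda\in\mathbb C$, again by completing the square. Completeness of $\{\psi^s_n\}$ then drops out: the explicit $K_s$ is the genuine reproducing kernel, its sections $K_s(\cdot,\zeta)$ are total in $\mathscr X_s(\mathbb C)$, yet each lies in $\overline{\operatorname{span}}\{\psi^s_n\}$ by the Mehler expansion, forcing that closed span to be all of $\mathscr X_s(\mathbb C)$.

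The third bullet is where I expect the real work. By the first two bullets $\varphi\in\mathscr X_s(\mathbb C)$ iff $\sum_n|c_n|^2<\infty$ for $c_n=(\varphi,\psi^s_n)_s$; rewriting $\varphi$ in the fixed $L^2(\mathbb R)$-normalized Hermite basis (proportional to $e^{-x^2/2}H_n$), the factorials and powers of $2$ buried in $b_{nn}(s)$ cancel against the normalization, and membership collapses to $\sum_n|a_n|^2q^n<\infty$ with $q=\frac{1+s}{1-s}$ and $a_n$ the Hermite coefficients of $\varphi$. As $s$ runs over $(0,1)$ the ratio $q$ sweeps out $(1,\infty)$, so $\bigcup_s\mathscr X_s(\mathbb C)$ consists exactly of those $\varphi$ whose normalized Hermite coefficients decay geometrically, i.e. $|a_n|\lesssim e^{-\delta n}$ for some $\delta>0$. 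The hard part will be matching this with $S^{1/2}_{1/2}$: one must invoke (or reprove) the classical characterization that a function lies in $S^{1/2}_{1/2}$ precisely when its Hermite coefficients obey exactly such geometric bounds, and then check that this coefficient description and the defining two-sided decay and derivative estimates of $S^{1/2}_{1/2}$ — together with the holomorphic extension to $\mathbb C$ — really cut out the same set, with uniform control as $s$ varies and at the degenerate regimes $s\to0$ and $s\to1$. This last identification of spaces, rather than any single computation, is the genuine obstacle.
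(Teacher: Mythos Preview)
Your proposal is essentially the original van Eijndhoven--Meyers argument: Gaussian integration against the Hermite generating function for the Gram matrix, Mehler's formula for the closed-form kernel, and the Hermite-coefficient characterization of $S^{1/2}_{1/2}$ for the third bullet. The computations you outline are correct, and the logical order (orthonormality first, then kernel via the orthonormal expansion, completeness from totality of kernel sections) is clean.

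The paper, however, does not prove Theorem~\ref{theorem:EM} in this way; it quotes the result from \cite{EM} and then re-derives the reproducing-kernel bullet by a genuinely different mechanism. In the proof of Theorem~\ref{theorem:bargmann} the author writes down an explicit Hilbert-space isomorphism $\mathscr{X}_s(\mathbb{C})\to\mathscr{H}_B(\mathbb{C})$ (a dilation composed with multiplication by a Gaussian), and then simply transports the known reproducing formula $\varphi=B\circ B^\ast\varphi$ for $\mathscr{H}_B(\mathbb{C})$ back through this map by change of variables, arriving at $K_s$ without ever summing a Mehler series. Independently, Theorem~\ref{theorem:whatsixs} recognizes $\mathscr{X}_s(\mathbb{C})$ as one of Sj\"ostrand's spaces $\mathscr{H}_\Phi(\mathbb{C})$, so that the reproducing formula is the general projection $T\circ T^\ast$ specialized to a particular quadratic phase. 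The Gelfand--Shilov bullet is not addressed in the paper at all.

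So your route is self-contained and actually covers all three bullets, treating $\mathscr{X}_s(\mathbb{C})$ as a standalone object. The paper's route trades that completeness for structural insight: the reproducing kernel and (via the correspondence \eqref{equation:hermite31}) the holomorphic Hermite system are seen as images under an explicit isomorphism of their classical Segal--Bargmann counterparts, rather than computed ab initio.
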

After that, the function space $\mathscr{X}_s(\mathbb{C})$ 
and the system of holomorphic Hermite functions $\{\psi^s_n\}_{n=0}^\infty$ 
have been applied to studying 
quantization on $\mathbb{C}$ and related problems 
(see \cite{szafraniec, GF, AGHS}), 
combinatorics and counting (see \cite{IS}) 
and etc. 
\par
Most recently, 
the author happened to introduce holomorphic Hermite functions 
associated with ellipses on the complex plane in \cite{chihara2}. 
In order to explain this, 
we here introduce 
the standard Bargmann transform, 
the standard Segal-Bargmann space 
and their basic properties quickly.  
See \cite{chihara2, folland} for the detail.  
For any rapidly decreasing function $f(x)$ on the real line $\mathbb{R}$, 
its standard Bargmann transform $Bf(z)$ is defined by 
$$
Bf(z)
=
2^{-1/2}
\pi^{-3/4}
\int_\mathbb{R}
e^{-z^2/4+zx-x^2/2}
f(x)
dx,
\quad
z\in\mathbb{C}. 
$$
The Bargmann transform can be extended 
for all the tempered distributions on $\mathbb{R}$ 
since
$$
\lvert{e^{-z^2/4+zx-x^2/2}}\rvert=\mathcal{O}(e^{-x^2/4})
$$
for any fixed $z\in\mathbb{C}$. 
We denote the set of all square integrable functions on $\mathbb{R}$ 
by $L^2(\mathbb{R})$, 
which is a Hilbert space equipped with an inner product 
$$
(f,g)_{L^2}
=
\int_\mathbb{R}
f(x)
\overline{g(x)}
dx, 
\quad
f,g \in L^2(\mathbb{R}). 
$$
Set $\lVert{f}\rVert_{L^2}=\sqrt{(f,f)_{L^2}}$ for short. 
Moreover, 
we denote the set of all square integrable holomorphic functions on $\mathbb{C}$ 
with respect to a weighted measure $e^{-\lvert{z}\rvert^2/2}L(dz)$ 
by $\mathscr{H}_B(\mathbb{C})$, 
which is said to be the standard Segal-Bargmann space on $\mathbb{C}$ 
and is a Hilbert space equipped with an inner product 
$$
(\varphi,\psi)_{\mathscr{H}_B}
=
\int_\mathbb{C}
\varphi(z)\overline{\psi(z)}
e^{-\lvert{z}\rvert^2/2}
L(dz), 
\quad
\varphi,\psi \in \mathscr{H}_B(\mathbb{C}). 
$$
Set 
$\lVert{\varphi}\rVert_{\mathscr{H}_B}=\sqrt{(\varphi,\varphi)_{\mathscr{H}_B}}$ 
for short. 
It is well-known that the Bargmann transform $B$ is a Hilbert space isomorphism 
of $L^2(\mathbb{R})$ onto $\mathscr{H}_B(\mathbb{C})$, that is, 
$B$ is bijective and  
$(Bf,Bg)_{\mathscr{H}_B}=(f,g)_{L^2}$ for $f,g \in L^2(\mathbb{R})$. 
The inverse of $B$ is given by the adjoint of $B$, that is, 
$$
B^\ast\varphi(x)
=
2^{-1/2}
\pi^{-3/4}
\int_\mathbb{C}
e^{-\bar{z}^2/4+\bar{z}x-x^2/2}
\varphi(z)
e^{-\lvert{z}\rvert^2/2}
L(dz), 
\quad
x\in\mathbb{R}.
$$
Note that $B^\ast$ can be extended for $L^2_B(\mathbb{C})$, 
which is the set of all square integrable functions on $\mathbb{C}$ 
with respect to a weighted measure $e^{-\lvert{z}\rvert^2/2}L(dz)$. 
This is also a Hilbert space. 
The definition of its inner product and norm is 
the same as those of $\mathscr{H}_B(\mathbb{C})$. 
Then $\mathscr{H}_B(\mathbb{C})$ is a closed subspace of $L^2_B(\mathbb{C})$, 
and the projection of $L^2_B(\mathbb{C})$ onto $\mathscr{H}_B(\mathbb{C})$ 
is given by 
$$
B{\circ}B^\ast{F(z)}
=
\frac{1}{2\pi}
\int_\mathbb{C}
e^{z\bar{\zeta}/2}
F(\zeta)
e^{-\lvert\zeta\rvert^2/2}
L(d\zeta), 
\quad
F \in L^2_B(\mathbb{C}).
$$
In particular, $\varphi=B{\circ}B^\ast\varphi$ for $\varphi\in\mathscr{H}_B(\mathbb{C})$. 
\par
Suppose that $\alpha>0$ and $\beta\in\mathbb{R}$ are fixed constants satisfying 
$(\alpha,\beta)\ne(1,0)$. 
For $\rho>0$, we consider an elliptic disk of the form 
$$
E_\rho(\alpha,\beta)
=
\{
x+i\xi\in\mathbb{C} 
\ \vert \ 
(x,\xi)\in\mathbb{R}^2, 
\lvert\alpha{x}+i(\beta{x}+\xi)\rvert\leqslant\rho
\}. 
$$
Note that $\partial E_\rho(\alpha,\beta)$ is an ellipse 
whose major and minor axes join at the origin of $\mathbb{C}$. 
For $z=x+i\xi$, set 
$$
\zeta
=
\alpha{x}+i(\beta{x}+\xi)
=
\frac{\alpha+1+i\beta}{2}
z
+
\frac{\alpha-1+i\beta}{2}
\bar{z}, 
$$
$$
\Psi^{\alpha,\beta}_0(z)
=
\exp\left(\frac{\mu}{4}z^2\right),
\quad
\mu
=
\mu(\alpha,\beta)
=
\frac{1-\alpha^2-\beta^2+2i\beta}{1+\alpha^2+\beta^2}, 
$$
$$
\Psi_n^{\alpha,\beta}(z)
=
\left\{
e^{\lambda z^2/2}
\left(\frac{d}{dz}\right)^n
e^{-\lambda z^2/2}
\right\}
\Psi^{\alpha,\beta}_0(z), 
\quad
n=1,2,3,\dotsc, 
$$
$$
\lambda
=
\lambda(\alpha,\beta)
=
\frac{2\alpha^2}{(1+\alpha^2+\beta^2)(1-\alpha^2-\beta^2-2i\beta)}. 
$$
More precisely, 
$\{\Psi^{\alpha,\beta}\}_{n=1}^\infty$  
is generated by the creation operators, 
which is the adjoint of the annihilation operator for $\Psi^{\alpha,\beta}_0$. 
Then we have 
$$
\lvert\Psi^{\alpha,\beta}_0(z)\rvert^2
e^{-\lvert{z}\rvert^2/2}
=
\exp\left(-\frac{\lvert\zeta\rvert^2}{1+\alpha^2+\beta^2}\right), 
$$
which shows that the function $\Psi^{\alpha,\beta}_0$ 
and the system $\{\Psi^{\alpha,\beta}_n\}_{n=1}^\infty$ 
are concerned with the elliptic disk $E_\rho(\alpha,\beta)$ in some sense.  
One of the results in \cite{chihara2} is the following. 
\begin{theorem}[{\cite[Theorem~4.2]{chihara2}}] 
The family 
$\{\Psi^{\alpha,\beta}_n\}_{n=0}^\infty$ 
is a complete orthogonal system of $\mathscr{H}_B(\mathbb{C})$.  
\end{theorem}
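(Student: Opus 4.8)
The plan is to exhibit $\{\Psi^{\alpha,\beta}_n\}_{n=0}^\infty$ (abbreviated $\Psi_n$) as the tower of excited states obtained by applying one raising operator to the vacuum $\Psi_0$, and to read off both orthogonality and completeness from the resulting harmonic-oscillator structure on $\mathscr{H}_B(\mathbb{C})$. First I would put the defining formula in operator form. The bracketed factor $e^{\lambda z^2/2}(d/dz)^n e^{-\lambda z^2/2}$, evaluated on the constant function, is the generalized Hermite polynomial $q_n(z)=\bigl(\tfrac{d}{dz}-\lambda z\bigr)^n 1$ of degree $n$, so $\Psi_n=q_n\,\Psi_0$. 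Conjugating the recurrence $q_{n+1}=\bigl(\tfrac{d}{dz}-\lambda z\bigr)q_n$ by multiplication with $\Psi_0=e^{\mu z^2/4}$ yields $\Psi_{n+1}=C\Psi_n$, where $C=\dfrac{d}{dz}-\Bigl(\dfrac{\mu}{2}+\lambda\Bigr)z$ is a single raising operator; hence $\Psi_n=C^n\Psi_0$ on the dense domain of finite combinations $p(z)\Psi_0$.

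Next I would build the ladder algebra. Integrating by parts against the weight $e^{-\lvert z\rvert^2/2}L(dz)$ (equivalently, testing on the monomials, which are orthogonal in $\mathscr{H}_B(\mathbb{C})$) gives the adjoint relations $(d/dz)^\ast=\tfrac12 z$ and $z^\ast=2\,d/dz$, so the lowering operator is $A:=C^\ast=\dfrac{z}{2}-2\Bigl(\dfrac{\bar\mu}{2}+\bar\lambda\Bigr)\dfrac{d}{dz}$. Two identities drive everything. The vacuum condition $A\Psi_0=0$ is equivalent to $\mu\bigl(\tfrac{\bar\mu}{2}+\bar\lambda\bigr)=\tfrac12$, which reduces to the algebraic identity $(1-\alpha^2-\beta^2)^2+4\alpha^2+4\beta^2=(1+\alpha^2+\beta^2)^2$. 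The canonical commutation relation is $[A,C]=\kappa I$ with $\kappa=\lambda(\bar\mu+2\bar\lambda)=\dfrac{2\alpha^2}{(1+\alpha^2+\beta^2)^2}+2\lvert\lambda\rvert^2$, a strictly positive real number because $\alpha>0$. Granting these two short computations, orthogonality and the norms become formal: pushing every $A$ to the right with $A\Psi_0=0$ and $[A,C]=\kappa$ gives $(\Psi_m,\Psi_n)_{\mathscr{H}_B}=(\Psi_0,A^m C^n\Psi_0)_{\mathscr{H}_B}=\delta_{mn}\,n!\,\kappa^n\lVert\Psi_0\rVert_{\mathscr{H}_B}^2$, so the system is orthogonal with $\lVert\Psi_n\rVert_{\mathscr{H}_B}^2=n!\,\kappa^n\lVert\Psi_0\rVert_{\mathscr{H}_B}^2$.

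Completeness is the step I expect to be the main obstacle, and I would settle it through the reproducing-kernel criterion: an orthonormal system in a reproducing kernel Hilbert space is complete if and only if $\sum_n e_n(z)\overline{e_n(w)}$ reproduces the kernel, here $K(z,w)=\tfrac{1}{2\pi}e^{z\bar w/2}$ read off from the projection $B\circ B^\ast$. To evaluate the diagonal sum I would use the coherent states $g_t:=\sum_n\frac{t^n}{n!}\Psi_n=e^{tC}\Psi_0$; this series converges in $\mathscr{H}_B(\mathbb{C})$ since $\sum_n\lvert t\rvert^{2n}\kappa^n/n!<\infty$, and solving $\partial_t g_t=Cg_t$ gives the closed form $g_t(z)=\exp\bigl(\tfrac{\mu}{4}z^2-\lambda t z-\tfrac{\lambda}{2}t^2\bigr)$. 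A Gaussian average in $t$ then collapses the sum, $\sum_n\dfrac{\Psi_n(z)\overline{\Psi_n(w)}}{n!\,\kappa^n}=\dfrac{\kappa}{\pi}\int_{\mathbb{C}}g_t(z)\overline{g_t(w)}\,e^{-\kappa\lvert t\rvert^2}L(dt)$, reducing the whole identity to one Gaussian integral in the plane. That integral converges precisely when $\kappa>\lvert\lambda\rvert$, which is once more equivalent to $\lvert\mu\rvert<1$ and hence to $\alpha>0$; carrying it out and dividing by $\lVert\Psi_0\rVert_{\mathscr{H}_B}^2$ is the one genuinely computational point, and matching the answer to $\tfrac{1}{2\pi}e^{z\bar w/2}$ again invokes the identity above.

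Equivalently, and more conceptually, I would pull back by $B^{-1}$: this turns $C$ into a squeezed creation operator with squeezed Gaussian vacuum $B^{-1}\Psi_0$, and the positivity $\kappa>0$ lets a metaplectic unitary conjugate the pair $(A,C)$ to the standard oscillator, so that $\{B^{-1}\Psi_n\}$ is a unitary image of the Hermite basis and therefore complete in $L^2(\mathbb{R})$; transporting back by the isomorphism $B$ gives completeness of $\{\Psi_n\}$ in $\mathscr{H}_B(\mathbb{C})$. In every version the delicate point is the same positivity, governed by the standing hypothesis $\alpha>0$ (equivalently $\lvert\mu\rvert<1$, equivalently $\kappa>\lvert\lambda\rvert$), which guarantees that $\Psi_0$ is a true vacuum and that the construction is a nondegenerate harmonic oscillator rather than a collapsed one.
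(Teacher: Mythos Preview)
The present paper does not prove this theorem; it is quoted verbatim from \cite[Theorem~4.2]{chihara2} and used as background, so there is no ``paper's own proof'' to compare against here. Any comparison would have to be with the argument in \cite{chihara2}, which is not reproduced.

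That said, your outline is internally sound. The ladder identities are correct: with the weight $e^{-\lvert z\rvert^2/2}$ one has $(d/dz)^\ast=z/2$ and $z^\ast=2\,d/dz$, the vacuum equation $A\Psi_0=0$ reduces exactly to $\mu(\bar\mu+2\bar\lambda)=1$, and the commutator comes out to $[A,C]=\lambda/\mu=\dfrac{2\alpha^2}{(1-\alpha^2-\beta^2)^2+4\beta^2}>0$, which matches your expression $\lambda\bar\mu+2\lvert\lambda\rvert^2$. Orthogonality then follows formally. For completeness, your convergence condition $\kappa>\lvert\lambda\rvert$ is indeed equivalent to $\lvert\mu\rvert<1$, since $\kappa=\lvert\lambda\rvert/\lvert\mu\rvert$; and $\lvert\mu\rvert^2=1-4\alpha^2/(1+\alpha^2+\beta^2)^2<1$ is automatic from $\alpha>0$. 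The one step you leave undone---evaluating the planar Gaussian in $t$ and matching it to $\tfrac{1}{2\pi}e^{z\bar w/2}$---is routine once the quadratic form is known to be positive definite, but you should actually carry it out rather than assert it. Your alternative route through $B^{-1}$ and the metaplectic representation is cleaner and closer in spirit to how \cite{chihara2} treats these systems (via Bargmann-type transforms of the real Hermite functions), so if you want a fully written proof that avoids the Gaussian bookkeeping, that is the one to flesh out.
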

One of the purposes of the present paper is to understand 
Theorem~\ref{theorem:EM} and its subsequent results 
in the framework of the standard Segal-Bargmann space $\mathscr{H}_B(\mathbb{C})$. 
\par
The standard Bargmann transform $B$, 
the standard Segal-Bargmann space $\mathscr{H}_B(\mathbb{C})$ 
and related objects are generalised. 
In fact, Sj\"ostrand constructed more general framework 
and applied it to studying microlocal analysis. 
See, e.g., \cite{sjoestrand, chihara1} and references therein. 
In what follows we recall Sj\"ostrand's theory restricted on $\mathbb{R}$ quickly. 
The generalisation of the Bargmann transform $B$ 
is given as a global Fourier integral operator of the form 
$$
Tf(z)
=
C_\phi
\int_\mathbb{R}
e^{i\phi(z,x)}
f(x)
dx,
\quad
z\in\mathbb{C}, 
$$
where $\phi(z,x)$ is a complex-valued quadratic phase function of the form 
$$
\phi(z,x)
=
\frac{a}{2}z^2
+
bzx
+
\frac{c}{2}
x^2
$$
for an appropriate function $f$ 
with assumptions $b\ne0$ and $\operatorname{Im}c>0$, 
and 
$C_\phi=2^{-1/2}\pi^{-3/4}\lvert{b}\rvert(\operatorname{Im}c)^{-1/4}$. 
We call $T$ a Bargmann-type transform. 
Note that $T$ can be also extended for all the tempered distributions on $\mathbb{R}$ 
since $\lvert{e^{i\phi(z,x)}}\rvert=\mathcal{O}(e^{-(\operatorname{Im}c)x^2/4})$. 
The case of $a=i/2$, $b=-i$ and $c=i$ corresponds to the standard Bargmann transform $B$. 
Set 
\begin{align*}
  \Phi(z)
& =
  \frac{\lvert{bz}\rvert^2}{4\operatorname{Im}c}
  -
  \frac{b^2z^2+\bar{b}^2\bar{z}^2}{8\operatorname{Im}c}
  -
  \frac{az^2-\bar{a}\bar{z}^2}{4i},
\\
  \Psi(z,\zeta)
& =
  \frac{\lvert{b}\rvert^2z\zeta}{4\operatorname{Im}c}
  -
  \frac{b^2z^2+\bar{b}^2\zeta^2}{8\operatorname{Im}c}
  -
  \frac{az^2-\bar{a}\zeta^2}{4i}.
\end{align*}
We denote the set of all square integrable holomorphic functions on $\mathbb{C}$ 
with respect to a weighted measure $e^{-2\Phi(z)}L(dz)$ 
by $\mathscr{H}_\Phi(\mathbb{C})$, 
which is a Hilbert space equipped with an inner product 
$$
(\varphi,\psi)_{\mathscr{H}_\Phi}
=
\int_\mathbb{C}
\varphi(z)\overline{\psi(z)}
e^{-2\Phi(z)}
L(dz), 
\quad
\varphi, \psi \in \mathscr{H}_\Phi(\mathbb{C}). 
$$
Set 
$\lVert\varphi\rVert_{\mathscr{H}_\Phi}=\sqrt{(\varphi,\varphi)_{\mathscr{H}_\Phi}}$ 
for short. 
Similarly we define $L^2_\Phi(\mathbb{C})$ 
which is a Hilbert space consisting of all square integrable functions 
on $\mathbb{C}$ with respect to the weighted measure $e^{-2\Phi(z)}L(dz)$. 
We denote its inner product and norm by 
$(\cdot,\cdot)_{L^2_\Phi}$ and $\lVert\cdot\rVert_{L^2_\Phi}$ respectively. 
The operator $T$ gives a Hilbert space isomorphism of 
$L^2(\mathbb{R})$ onto $\mathscr{H}_\Phi(\mathbb{C})$, that is, 
$T$ is bijective and 
$(Tf,Tg)_{\mathscr{H}_\Phi}=(f,g)_{L^2}$ 
for $f,g \in L^2(\mathbb{R})$. 
The inverse of $T$ is the formal adjoint $T^\ast$ which is concretely given by 
$$
T^\ast\varphi(x)
=
C_\phi
\int_\mathbb{C}
e^{-i\overline{\phi(z,x)}}
\varphi(z)
e^{-2\Phi(z)}
L(dz),
\quad
x\in\mathbb{R}. 
$$
Moreover, 
$\mathscr{H}_\Phi(\mathbb{C})$ is a closed subspace of $L^2_\Phi(\mathbb{C})$, 
and the projection operator is given by 
$$
T{\circ}T^\ast{F}(z)
=
C_\Phi
\int_\mathbb{C}
e^{2\Psi(z,\bar{\zeta})}
F(\zeta)
e^{-2\Phi(\zeta)}
L(d\zeta),
\quad
F \in L^2_\Phi(\mathbb{C}), 
$$
where $C_\Phi=\lvert{b}\rvert/2\pi\operatorname{Im}c$. 
In particular, 
$\varphi=T{\circ}T^\ast\varphi$ for $\varphi\in\mathscr{H}_\Phi(\mathbb{C})$. 
\par
The purpose of the present paper is to understand 
the function space $\mathscr{X}_s(\mathbb{C})$ 
and 
the family of holomorphic Hermite functions $\{\psi^s_n\}_{n=0}^\infty$, 
and the subsequent results in the framework of Sj\"ostrand's microlocal analysis. 
More precisely, in Section~\ref{section:ellipse} 
we first study the properties of  
$\mathscr{X}_s(\mathbb{C})$ and $\{\psi^s_n\}_{n=0}^\infty$ 
in the framework of the standard Bargmann transform. 
In particular, we shall understand $\{\psi^s_n\}_{n=0}^\infty$ 
from a view point of ellipses originated in \cite{chihara2}. 
Finally, in Section~\ref{section:sjoestrand} 
we study $\mathscr{X}_s(\mathbb{C})$ and $\{\psi^s_n\}_{n=0}^\infty$, 
and some of subsequent results in the framework of Sj\"ostrand. 
\section{$\mathscr{X}_s(\mathbb{C})$ and ellipses}
\label{section:ellipse}
In this section we shall understand Theorem~\ref{theorem:EM} 
from a view point of \cite[Section~4]{chihara2}. 
Our results in the present section are the following. 
\begin{theorem}
\label{theorem:bargmann} 
$\mathscr{X}_s(\mathbb{C})$ 
and 
$\{\psi^s_n\}_{n=0}^\infty$ 
are essentially determined by 
the ellipse $\partial E_\rho(\sqrt{s},0)$ 
in the framework $\mathscr{H}_B(\mathbb{C})$. 
More precisely, we have a Hilbert space isomorphism 
\begin{equation}
\mathscr{X}_s(\mathbb{C})
\ni
\varphi(z)
\quad \mapsto \quad
\sqrt{\frac{s}{1-s^2}}
\varphi\left(\frac{s}{1-s^2}z\right)
\exp\left(\frac{1}{4} \frac{1+s^2}{1-s^2}z^2\right)
\in
\mathscr{H}_B(\mathbb{C}) 
\label{equation:isomorphism31}
\end{equation}
whose inverse is given by 
\begin{equation}
\mathscr{H}_B(\mathbb{C})
\ni
\psi(z)
\quad \mapsto \quad 
\sqrt{\frac{1-s^2}{s}}
\psi\left(\frac{1-s^2}{s}z\right)
\exp\left(-\frac{1+s^2}{4s}z^2\right)
\in
\mathscr{X}_s(\mathbb{C}), 
\label{equation:isomorphism32}
\end{equation}
and for $n=0,1,2,\dotsc$, 
\begin{align}
& \psi^s_n(z)
  \times
  \exp\left(-\frac{1-s^2}{4s}\lvert{z}\rvert^2+\frac{1+s^2}{4s}z^2\right)
\nonumber
\\
  =
& \left(-\sqrt{\frac{1-s^2}{s}}\right)^n
  b_{nn}(s)^{-1/2}
  \Psi^{\sqrt{s},0}_n
  \left(\sqrt{\frac{1-s^2}{s}}z\right)
  \times
  \exp\left(-\frac{1}{4}\left\lvert\sqrt{\frac{1-s^2}{s}}z\right\rvert^2\right).
\label{equation:hermite31}
\end{align}
Moreover, the reproducing kernel $K_s(z,\zeta)$ of $\mathscr{X}_s(\mathbb{C})$ 
can be also obtained 
by the Hilbert space isomorphism and 
the reproducing formula for $\mathscr{H}_B(\mathbb{C})$. 
\end{theorem}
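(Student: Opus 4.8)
The plan is to realize the correspondence \eqref{equation:isomorphism31} as an explicit weighted-composition operator, write it as $U\varphi(z)=c\,\varphi(\kappa z)\,e^{\gamma z^2}$ with real constants $c,\kappa,\gamma$, and prove it is a surjective isometry by three largely computational steps; the Hermite-function identity and the reproducing kernel will then follow by transport of structure. First I would check that $U$ maps $\mathscr{X}_s(\mathbb{C})$ into $\mathscr{H}_B(\mathbb{C})$ and is isometric. Holomorphy of $U\varphi$ is immediate, since it is the product of the holomorphic functions $\varphi(\kappa z)$ and $e^{\gamma z^2}$. For the norm I would insert $U\varphi$ into $\lVert\cdot\rVert_{\mathscr{H}_B}$, use $\lvert e^{\gamma z^2}\rvert^2=e^{\gamma(z^2+\bar z^2)}$ (valid because $\gamma$ is real), and perform the real change of variables $w=\kappa z$, for which $L(dz)=\kappa^{-2}L(dw)$. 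Collecting the coefficients of $\lvert w\rvert^2$ and of $w^2+\bar w^2$ in the resulting exponent and comparing with the weight defining $\lVert\cdot\rVert_s$ shows that the prefactor, the scaling, and the Gaussian multiplier are exactly the three constants appearing in \eqref{equation:isomorphism31}, so that $\lVert U\varphi\rVert_{\mathscr{H}_B}=\lVert\varphi\rVert_s$. I would then verify that the operator $V$ defined by \eqref{equation:isomorphism32} satisfies $VU=\mathrm{id}$ and $UV=\mathrm{id}$ by direct composition: the two scalings are mutually reciprocal and the two Gaussian exponents cancel once the inner scaling is applied. A bijective isometry is a Hilbert space isomorphism, which gives the first assertion.

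Next, for \eqref{equation:hermite31} I would make the generating formula for $\Psi^{\alpha,\beta}_n$ explicit at $(\alpha,\beta)=(\sqrt s,0)$, where $\mu=\frac{1-s}{1+s}$ and $\lambda=\frac{2s}{1-s^2}$. The Rodrigues-type factor simplifies through the scaled Hermite identity $e^{\lambda z^2/2}(d/dz)^n e^{-\lambda z^2/2}=(-1)^n(\lambda/2)^{n/2}H_n(\sqrt{\lambda/2}\,z)$, obtained from the definition of $H_n$ by the substitution $u=\sqrt{\lambda/2}\,z$; this expresses $\Psi^{\sqrt s,0}_n$ as a constant multiple of $H_n(\kappa z)e^{\mu z^2/4}$. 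Writing $\psi^s_n=b_{nn}(s)^{-1/2}e^{-z^2/2}H_n$ and attaching the stated Gaussian factors to both sides of \eqref{equation:hermite31}, I would then check that the two sides collapse to the same expression $b_{nn}(s)^{-1/2}H_n(z)\exp\big(\tfrac{(1-s)^2}{4s}z^2-\tfrac{1-s^2}{4s}\lvert z\rvert^2\big)$. The powers of the scaling and the signs $(-1)^n$ match, and the quadratic exponents coincide because $\mu\kappa^{-2}=\frac{(1-s)^2}{s}$ while $-\tfrac12+\tfrac{1+s^2}{4s}=\frac{(1-s)^2}{4s}$.

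Finally, the reproducing kernel would be obtained by transporting the formula $\varphi=B\circ B^\ast\varphi$ through the isomorphism. Starting from $U\varphi(z)=\frac1{2\pi}\int_\mathbb{C}e^{z\bar\eta/2}(U\varphi)(\eta)e^{-\lvert\eta\rvert^2/2}L(d\eta)$, I would cancel the factor $c\,e^{\gamma z^2}$, substitute $\zeta=\kappa\eta$, and rewrite the identity at the point $\kappa z$. The exponents of $\zeta^2$ and of $\lvert\zeta\rvert^2$ produced by the multiplier and by the change of variables cancel exactly against the weight of $\mathscr{X}_s(\mathbb{C})$, leaving a reproducing formula whose kernel one reads off; matching the prefactor and the surviving $z\bar\zeta$, $z^2$, $\bar\zeta^2$ terms reproduces the stated $K_s(z,\zeta)$.

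Each individual step is Gaussian bookkeeping rather than a structural difficulty. The main obstacle is keeping the coefficients of $z^2$, $\bar\zeta^2$, and $\lvert\zeta\rvert^2$ straight through the simultaneous scaling and division by the weight, and, in the second step, pinning down the constant $(-\sqrt{(1-s^2)/s}\,)^n\,b_{nn}(s)^{-1/2}$ with the correct sign and the correct power of the scaling, since that is where a stray factor of $\kappa$ or $(-1)^n$ is easiest to lose.
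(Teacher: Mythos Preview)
Your proposal is correct and follows essentially the same route as the paper: a direct change-of-variables computation to establish the isometry, an explicit evaluation of $\Psi^{\sqrt{s},0}_n$ via the scaled Rodrigues formula to match it with $\psi^s_n$, and transport of the reproducing formula $B\circ B^\ast$ through the isomorphism to recover $K_s$. The only cosmetic difference is that the paper routes the norm computation through the auxiliary function $P(z)=\varphi(z)e^{z^2/2}$, which makes the intermediate weight $\exp\bigl(\tfrac14\tfrac{1-s}{1+s}\zeta^2\bigr)=\Psi^{\sqrt{s},0}_0(\zeta)$ appear explicitly and thereby displays the link to the ellipse $\partial E_\rho(\sqrt{s},0)$; you reach the same endpoint without that detour.
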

Recall the definition of $\partial E_\rho(\sqrt{s},0)$, that is, 
$$
\partial E_\rho(\sqrt{s},0)
=
\{
x+i\xi\in\mathbb{C}
\ \vert \ 
(x,\xi)\in\mathbb{R}^2, 
sx^2+\xi^2=\rho^2\}. 
$$
We here remark that 
$\{\partial E_\rho(\sqrt{s},0)\ \vert \ 0<s<1, \rho>0\}$ 
is the set of all ellipses whose major and minor axes are 
contained in the real and imaginary axes respectively. 
\begin{proof}[Proof of Theorem~\ref{theorem:bargmann}] 
First we shall show that 
\eqref{equation:isomorphism31} 
is a Hilbert space isomorphism of $\mathscr{X}_s(\mathbb{C})$ 
onto $\mathscr{H}_B(\mathbb{C})$ 
and its inverse is given by \eqref{equation:isomorphism32}. 
Let $\varphi(z)$ be a Lebesgue measurable function on $\mathbb{C}$. 
Set $P(z)=\varphi(z)e^{z^2/2}$, 
which corresponds to the holomorphic Hermite polynomials $H_n(z)$. 
By using an identity of the form  
$$
-
\frac{1}{2}
=
-
\frac{1+s^2}{4s}
+
\frac{(1-s)^2}{4s}, 
$$
we deduce that 
\begin{align*}
  \lVert\varphi\rVert^2_s
& =
  \int_\mathbb{C}
  \lvert\varphi(z)\rvert^2
  \exp\left(
      -
      \frac{1-s^2}{2s}\lvert{z}\rvert^2
      +
      \frac{1+s^2}{4s}
      (z^2+\bar{z}^2)
      \right)
  L(dz)
\\
& =
  \int_\mathbb{C}
  \lvert{P(z)e^{-z^2/2}}\rvert^2
  \exp\left(
      -
      \frac{1-s^2}{2s}\lvert{z}\rvert^2
      +
      \frac{1+s^2}{4s}
      (z^2+\bar{z}^2)
      \right)
  L(dz)
\\
& =
  \int_\mathbb{C}
  \left\lvert
  P(z)
  \exp\left(-\frac{1+s^2}{4s}z^2+\frac{(1-s)^2}{4s}z^2\right)
  \right\rvert^2
  \exp\left(
      -
      \frac{1-s^2}{2s}\lvert{z}\rvert^2
      +
      \frac{1+s^2}{4s}
      (z^2+\bar{z}^2)
      \right)
  L(dz)
\\
& =
  \int_\mathbb{C}
  \left\lvert
  P(z)
  \exp\left(
      \frac{(1-s)^2}{4s}z^2
      \right)
  \right\rvert^2
  \exp\left(
      -
      \frac{1-s^2}{2s}\lvert{z}\rvert^2
      \right)
  L(dz).
\end{align*}
If we change the variable by 
$$
z=\sqrt{\frac{s}{1-s^2}}\zeta, 
$$
then we have 
\begin{align}
  \lVert\varphi\rVert^2_s
& =
  \frac{s}{1-s^2}
  \int_\mathbb{C}
  \left\lvert
  P\left(\sqrt{\frac{s}{1-s^2}}\zeta\right)
  \exp\left(\frac{1}{4}\frac{1-s}{1+s}\zeta^2\right)
  \right\rvert^2
  e^{-\lvert\zeta\rvert^2/2}
  L(d\zeta)
\label{equation:isomorphism33}
\\
& =
  \frac{s}{1-s^2}
  \int_\mathbb{C}
  \left\lvert
  \varphi\left(\sqrt{\frac{s}{1-s^2}}\zeta\right)
  \exp\left(\frac{1}{2}\frac{s}{1-s^2}\zeta^2+\frac{1}{4}\frac{1-s}{1+s}\zeta^2\right)
  \right\rvert^2
  e^{-\lvert\zeta\rvert^2/2}
  L(d\zeta)
\nonumber
\\
& =
  \frac{s}{1-s^2}
  \int_\mathbb{C}
  \left\lvert
  \varphi\left(\sqrt{\frac{s}{1-s^2}}\zeta\right)
  \exp\left(\frac{1}{4}\frac{1+s^2}{1-s^2}\zeta^2\right)
  \right\rvert^2
  e^{-\lvert\zeta\rvert^2/2}
  L(d\zeta). 
\label{equation:isomorphism34}
\end{align}
If $\varphi\in\mathscr{X}_s(\mathbb{C})$, then 
$$
\varphi\left(\sqrt{\frac{s}{1-s^2}}z\right)
\exp\left(\frac{1}{4}\frac{1+s^2}{1-s^2}z^2\right)
$$
is holomorphic in $\mathbb{C}$ and belongs to $\mathscr{H}_B(\mathbb{C})$. 
Hence \eqref{equation:isomorphism34} shows that 
\eqref{equation:isomorphism31} is an injective and isometric mapping of 
$\mathscr{X}_s(\mathbb{C})$ to $\mathscr{H}_B(\mathbb{C})$.  
In the same way one can show that 
\eqref{equation:isomorphism32} is an inverse of \eqref{equation:isomorphism31}. 
Thus \eqref{equation:isomorphism31} is a Hilbert space isomorphism of 
$\mathscr{X}_s(\mathbb{C})$ onto $\mathscr{H}_B(\mathbb{C})$, 
and its inverse is given by \eqref{equation:isomorphism32}. 
\par
Next we show that 
$\mathscr{X}_s(\mathbb{C})$ 
and 
$\{\psi^s_n\}_{n=0}^\infty$ 
are essentially determined by 
the ellipse $\partial E_\rho(\sqrt{s},0)$ 
in the framework $\mathscr{H}_B(\mathbb{C})$. 
This follows from 
\eqref{equation:isomorphism33} and the correspondence \eqref{equation:hermite31}. 
For this reason, it suffices to show the correspondence \eqref{equation:hermite31}. 
If we choose $(\alpha,\beta)=(\sqrt{s},0)$, then we have for $n=0,1,2,\dotsc$, 
$$
\mu(\sqrt{s},0)
=
\frac{1-s}{1+s},
\quad
\lambda(\sqrt{s},0)
=
\frac{2s}{1-s^2}, 
$$
\begin{align*}
  \Psi^{\sqrt{s},0}_n(z)
& =
  \left\{
  \exp\left(
      \frac{s}{1-s^2}
      z^2
      \right)
  \left(\frac{d}{dz}\right)^n
  \exp\left(
      -
      \frac{s}{1-s^2}
      z^2
      \right)
  \right\}
  \exp\left(
      \frac{1}{4}
      \frac{1-s}{1+s}
      z^2
      \right)
\\
& =
  \left(-\sqrt{\frac{s}{1-s^2}}\right)^n
  H_n\left(\sqrt{\frac{s}{1-s^2}}z\right)
  \exp\left(
       \frac{(1-s)^2}{4s}
      \left(\sqrt{\frac{s}{1-s^2}}z\right)^2
      \right).
\end{align*}
By using this, we deduce that 
\begin{align*}
& \psi^s_n(z)
  \exp\left(
      -
      \frac{1-s^2}{4s}\lvert{z}\rvert^2
      +
      \frac{1+s^2}{4s}z^2
      \right)
\\
  =
& b_{nn}(s)^{-1/2}H_n(z)e^{-z^2/2}
  \exp\left(
      -
      \frac{1-s^2}{4s}\lvert{z}\rvert^2
      +
      \frac{1+s^2}{4s}z^2
      \right)
\\
  =
& b_{nn}(s)^{-1/2}H_n(z)
  \exp\left(
      -
      \frac{1-s^2}{4s}\lvert{z}\rvert^2
      +
      \frac{(1-s)^2}{4s}z^2
      \right)
\\
  =
& \left(-\sqrt{\frac{1-s^2}{s}}\right)^n
  b_{nn}(s)^{-1/2}
  \Psi^{\sqrt{s},0}_n\left(\sqrt{\frac{1-s^2}{s}}z\right)
  \exp\left(-\frac{(1-s)^2}{4s}z^2\right)
\\
  \times
& \exp\left(
      -
      \frac{1-s^2}{4s}\lvert{z}\rvert^2
      +
      \frac{(1-s)^2}{4s}z^2
      \right)
\\
  =
& \left(-\sqrt{\frac{1-s^2}{s}}\right)^n
  b_{nn}(s)^{-1/2}
  \Psi^{\sqrt{s},0}_n\left(\sqrt{\frac{1-s^2}{s}}z\right)
  \exp\left(
      -
      \frac{1-s^2}{4s}\lvert{z}\rvert^2
       \right)
\\
  =
& \left(-\sqrt{\frac{1-s^2}{s}}\right)^n
  b_{nn}(s)^{-1/2}
  \Psi^{\sqrt{s},0}_n\left(\sqrt{\frac{1-s^2}{s}}z\right)
  \exp\left(
      -
      \frac{1}{4}
      \left\lvert{\sqrt{\frac{1-s^2}{s}}z}\right\rvert^2
       \right), 
\end{align*}
which is a desired equation \eqref{equation:hermite31}. 
\par
Finally, we show that the reproducing kernel 
$K_s(z,\zeta)$ can be obtained by the reproducing formula 
$\varphi=B{\circ}B^\ast\varphi$ for $\varphi \in \mathscr{H}_B(\mathbb{C})$. 
Let $\varphi \in \mathscr{X}_s(\mathbb{C})$. Then 
$$
\varphi\left(\frac{s}{1-s^2}z\right)
\exp\left(\frac{1}{4}\frac{1+s^2}{1-s^2}z^2\right)
\in
\mathscr{H}_B(\mathbb{C}). 
$$
Substitute this into the reproducing formula for $\mathscr{H}_B(\mathbb{C})$. 
Then we have 
$$
\varphi\left(\frac{s}{1-s^2}z\right)
\exp\left(\frac{1}{4}\frac{1+s^2}{1-s^2}z^2\right)
=
\frac{1}{2\pi}
\int_\mathbb{C}
e^{z\bar{\zeta}}
\varphi\left(\frac{s}{1-s^2}\zeta\right)
\exp\left(\frac{1}{4}\frac{1+s^2}{1-s^2}\zeta^2\right)
e^{-\lvert{\zeta}\rvert^2/2}
L(d\zeta). 
$$
Hence, 
$$
\varphi\left(\frac{s}{1-s^2}z\right)
=
\frac{1}{2\pi}
\int_\mathbb{C}
\exp\left(z\bar{\zeta}-\frac{1}{4}\frac{1+s^2}{1-s^2}z^2+\frac{1}{4}\frac{1+s^2}{1-s^2}\zeta^2\right)
\varphi\left(\frac{s}{1-s^2}\zeta\right)
e^{-\lvert{\zeta}\rvert^2/2}
L(d\zeta). 
$$
By using the change of variable 
$$
z \mapsto \sqrt{\frac{s}{1-s^2}}z
$$
for $z$ and $\zeta$, we deduce that 
\begin{align*}
  \varphi(z)
& =
  \frac{1-s^2}{2\pi s}
  \int_\mathbb{C}
  \exp\left(
      \frac{1-s^2}{s}z\bar{\zeta}
      -
      \frac{1+s^2}{4s}(z^2-\zeta^2)
      \right)
  \varphi(\zeta)
  \exp\left(
      -
      \frac{1-s^2}{2s}\lvert\zeta\rvert^2
      \right)
  L(d\zeta)
\\
& =
  \frac{1-s^2}{2\pi s}
  \int_\mathbb{C}
  \exp\left(
      \frac{1-s^2}{s}z\bar{\zeta}
      -
      \frac{1+s^2}{4s}(z^2+\bar{\zeta}^2)
      \right)
\\
& \times
  \varphi(\zeta)
  \exp\left(
      -
      \frac{1-s^2}{2s}\lvert\zeta\rvert^2
      +
      \frac{1+s^2}{4s}(\zeta^2+\bar{\zeta}^2)
      \right)
  L(d\zeta)
\\
& =
  \int_\mathbb{C}
  K_s(z,\zeta)
  \varphi(\zeta)
  \exp\left(
      -
      \frac{1-s^2}{2s}\lvert\zeta\rvert^2
      +
      \frac{1+s^2}{4s}(\zeta^2+\bar{\zeta}^2)
      \right)
  L(d\zeta). 
\end{align*}
This completes the proof. 
\end{proof}
%
%
\section{$\mathscr{X}_s(\mathbb{C})$ and the Bargmann-type transforms}
\label{section:sjoestrand}
In this section we study the function space $\mathscr{X}_s(\mathbb{C})$ 
and some related topics 
from a view point of Sj\"ostrand's theory of microlocal analysis 
based on the Bargmann-type transforms. 
We can choose the phase function $\phi(z,x)$ so that 
\begin{equation}
\Phi(z)
=
\frac{1-s^2}{4s}
\lvert{z}\rvert^2
-
\frac{1+s^2}{8s}
(z^2+\bar{z}^2).  
\label{equation:Phi}
\end{equation}
Indeed, if the constants $a$, $b$ and $c$ satisfy 
\begin{equation}
\frac{1-s^2}{4s}
=
\frac{\lvert{b}\rvert^2}{4\operatorname{Im}c},
\quad
\frac{1+s^2}{4s}
=
\frac{b^2}{4\operatorname{Im}c}
+
\frac{a}{2i},  
\label{equation:abc}
\end{equation}
then \eqref{equation:Phi} holds. 
There are uncountably many choices of the triple $(a,b,c)$ 
satisfying \eqref{equation:abc}. 
For example, the choice of 
$$
a=\frac{i}{s}, 
\quad
b={\pm}i\sqrt{1-s^2}, 
\quad
c=t+is
\quad
(t\in\mathbb{R})
$$
satisfies the condition \eqref{equation:Phi}. 
Moreover, if the condition \eqref{equation:abc} is satisfied, then 
$$
\Psi(z,\zeta)
=
\frac{1-s^2}{4s}
z\zeta
-
\frac{1+s^2}{8s}
(z^2+\zeta^2), 
$$
and 
$$
C_\Phi
=
\frac{\lvert{b}\rvert^2}{2\pi\operatorname{Im}c}
=
\frac{1-s^2}{2\pi s}. 
$$
Thus we have just proved the following theorem. 
\begin{theorem}
\label{theorem:whatsixs}
If we choose $(a,b,c)$ satisfying {\rm \eqref{equation:abc}}, then 
$\mathscr{X}_s(\mathbb{C})=\mathscr{H}_\Phi(\mathbb{C})$ and 
$$
T{\circ}T^\ast F(z)
=
\int_\mathbb{C}
K_s(z,\zeta)
F(\zeta)
\exp\left(
    -\frac{1-s^2}{2s}\lvert{\zeta}\rvert^2
    +
    \frac{1+s^2}{4s}
    (\zeta^2+\bar{\zeta}^2)
    \right)
L(d\zeta),
\quad
F \in L^2_\Phi(\mathbb{C}).
$$
\end{theorem}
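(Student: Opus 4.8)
The plan is to show that, under the constraint \eqref{equation:abc}, Sj\"ostrand's general data---the weight $e^{-2\Phi}$, the kernel phase $\Psi$, and the normalising constant $C_\Phi$---specialise to exactly the objects attached to $\mathscr{X}_s(\mathbb{C})$, so that both assertions follow by direct comparison of explicit expressions rather than by any analytic estimate. I would treat the identification of the spaces and the identification of the projection formula separately.

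First I would establish $\mathscr{X}_s(\mathbb{C})=\mathscr{H}_\Phi(\mathbb{C})$. By the choice of $(a,b,c)$ the identity \eqref{equation:Phi} holds, so that
$$
e^{-2\Phi(z)}
=
\exp\left(
-\frac{1-s^2}{2s}\lvert{z}\rvert^2
+\frac{1+s^2}{4s}(z^2+\bar{z}^2)
\right),
$$
which is precisely the weight in the definition of $\lVert\cdot\rVert_s$. Since $\mathscr{H}_\Phi(\mathbb{C})$ and $\mathscr{X}_s(\mathbb{C})$ are both the space of holomorphic functions on $\mathbb{C}$ that are square integrable against this common weight, and their inner products are given by the same integral, the two Hilbert spaces coincide, inner products and norms included. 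This step is immediate once \eqref{equation:Phi} is in hand.

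For the projection formula I would specialise the general expression $T{\circ}T^\ast F(z)=C_\Phi\int_\mathbb{C}e^{2\Psi(z,\bar\zeta)}F(\zeta)e^{-2\Phi(\zeta)}L(d\zeta)$ term by term. Under \eqref{equation:abc} the kernel phase simplifies to $\Psi(z,\zeta)=\frac{1-s^2}{4s}z\zeta-\frac{1+s^2}{8s}(z^2+\zeta^2)$ and the constant to $C_\Phi=\frac{1-s^2}{2\pi s}$, as already recorded. Replacing $\zeta$ by $\bar{\zeta}$ and doubling the phase gives
$$
C_\Phi\,e^{2\Psi(z,\bar{\zeta})}
=
\frac{1-s^2}{2\pi s}
\exp\left(
\frac{1-s^2}{2s}z\bar{\zeta}
-\frac{1+s^2}{4s}(z^2+\bar{\zeta}^2)
\right),
$$
which I would then recognise as $K_s(z,\zeta)$ by matching it against the explicit reproducing kernel in Theorem~\ref{theorem:EM}. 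Substituting this equality into the general projection formula, together with $e^{-2\Phi(\zeta)}$ written in the $\mathscr{X}_s$ form above, yields the claimed identity for $T{\circ}T^\ast$.

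All the computations are elementary, so I do not expect a serious obstacle; the only point requiring care is the bookkeeping in the second relation of \eqref{equation:abc}. One must check that forcing the $z^2$-coefficient of $\Phi$ to equal $-\frac{1+s^2}{8s}$ simultaneously produces the correct $\bar{z}^2$-coefficient. This is automatic because $\Phi$ is real-valued by construction, so the $\bar{z}^2$-coefficient is the complex conjugate of the $z^2$-coefficient and the target value $-\frac{1+s^2}{8s}$ is real; I would verify this conjugation symmetry explicitly to confirm that the choice of $(a,b,c)$ is consistent. With that observation the theorem follows.
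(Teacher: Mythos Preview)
Your proposal is correct and follows essentially the same route as the paper: the paper's argument consists precisely of verifying that under \eqref{equation:abc} the weight $e^{-2\Phi}$, the phase $\Psi$, and the constant $C_\Phi$ specialise to the $\mathscr{X}_s$ data, and then reading off the conclusion by direct comparison with the general projection formula. Your added remark about the $\bar{z}^2$-coefficient being forced by the reality of $\Phi$ is a small clarification the paper leaves implicit, but otherwise the arguments are identical.
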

The reproducing kernel Hilbert space $\mathscr{X}_s(\mathbb{C})$ 
and the system of holomorphic Hermite functions $\{\psi^s_n\}_{n=0}^\infty$ 
have been applied to studying quantization on the complex plane, 
counting and combinatorics, and etc. 
In the study of quantization on $\mathbb{C}$, 
Twareque~Ali, G\'orska, Horzela and Szafraniec 
constructed a Hilbert space isomorphism of 
$\mathscr{X}_s(\mathbb{C})$ onto $\mathscr{H}_B(\mathbb{C})$ 
and its inverse concretely. 
More precisely, they constructed an integral transform of 
$\mathscr{X}_s(\mathbb{C})$ onto $\mathscr{H}_B(\mathbb{C})$. 
They obtained its integral kernel 
by using the generating function of Hermite functions. 
Their idea works well 
since a system of monomials $\{z^n/\sqrt{\pi 2^{n+1} n!}\}_{n=0}^\infty$ 
is a complete orthonormal system of $\mathscr{H}_B(\mathbb{C})$. 
See their paper \cite[Section~III]{AGHS} for the detail. 
\par
Recall that 
$B: L^2(\mathbb{R}) \rightarrow \mathscr{H}_B(\mathbb{C})$ 
and 
$T: L^2(\mathbb{R}) \rightarrow \mathscr{H}_\Phi(\mathbb{C})$ 
are Hilbert space isomorphisms. 
By using this fact, one can construct uncountably many 
Hilbert space isomorphisms of 
$\mathscr{X}_s(\mathbb{C})$ onto $\mathscr{H}_B(\mathbb{C})$. 
Then we have the following theorem. 
\begin{theorem}
\label{theorem:isomorphism100}
If we choose $(a,b,c)$ satisfying {\rm \eqref{equation:abc}}, then 
$B{\circ}T^\ast$ is a Hilbert space isomorphism of 
$\mathscr{X}_s(\mathbb{C})$ onto $\mathscr{H}_B(\mathbb{C})$.  
\end{theorem}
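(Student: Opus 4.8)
The plan is to read $B{\circ}T^\ast$ as a composition of two Hilbert space isomorphisms that factors through $L^2(\mathbb{R})$, so that the statement reduces to bookkeeping of maps already constructed in the excerpt. First I would invoke Theorem~\ref{theorem:whatsixs}: once $(a,b,c)$ is chosen to satisfy \eqref{equation:abc}, the weight becomes $e^{-2\Phi(z)}=\exp\bigl(-\tfrac{1-s^2}{2s}\lvert{z}\rvert^2+\tfrac{1+s^2}{4s}(z^2+\bar{z}^2)\bigr)$, which is \emph{exactly} the weight defining $\lVert\cdot\rVert_s$. Hence $\mathscr{H}_\Phi(\mathbb{C})$ and $\mathscr{X}_s(\mathbb{C})$ are equal not merely as sets of holomorphic functions but as Hilbert spaces, carrying literally the same inner product.

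Next I would assemble the two isomorphisms recalled earlier in the paper. The Bargmann-type transform $T$ is a Hilbert space isomorphism of $L^2(\mathbb{R})$ onto $\mathscr{H}_\Phi(\mathbb{C})$ whose inverse, on the closed subspace $\mathscr{H}_\Phi(\mathbb{C})$, is the formal adjoint $T^\ast$; being the inverse of a unitary, $T^\ast$ is then a Hilbert space isomorphism of $\mathscr{H}_\Phi(\mathbb{C})=\mathscr{X}_s(\mathbb{C})$ onto $L^2(\mathbb{R})$. Likewise the standard Bargmann transform $B$ is a Hilbert space isomorphism of $L^2(\mathbb{R})$ onto $\mathscr{H}_B(\mathbb{C})$. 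The composition
$$
\mathscr{X}_s(\mathbb{C})=\mathscr{H}_\Phi(\mathbb{C})
\overset{T^\ast}{\longrightarrow}
L^2(\mathbb{R})
\overset{B}{\longrightarrow}
\mathscr{H}_B(\mathbb{C})
$$
is therefore bijective, and for every $\varphi\in\mathscr{X}_s(\mathbb{C})$ one has $\lVert B{\circ}T^\ast\varphi\rVert_{\mathscr{H}_B}=\lVert T^\ast\varphi\rVert_{L^2}=\lVert\varphi\rVert_{\mathscr{H}_\Phi}=\lVert\varphi\rVert_s$, so $B{\circ}T^\ast$ is a bijective isometry, hence a Hilbert space isomorphism. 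This is the assertion.

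I do not expect a genuine analytic obstacle, since the unitarity of $T$ and $B$ and the identity $T^{-1}=T^\ast$ on the relevant subspace have already been established. The only point needing care is conceptual rather than computational: one must use $T^\ast$ as the true inverse of $T$ \emph{restricted} to $\mathscr{H}_\Phi(\mathbb{C})$ (recall $T^\ast$ is a priori defined on all of $L^2_\Phi(\mathbb{C})$, where $T{\circ}T^\ast$ is only the orthogonal projection onto $\mathscr{H}_\Phi(\mathbb{C})$), together with the exact matching of weights recorded in Theorem~\ref{theorem:whatsixs}. If an explicit description is desired, I would afterwards compose the defining integrals of $B$ and $T^\ast$ and perform the resulting Gaussian integral in $x$ to exhibit the integral kernel of $B{\circ}T^\ast$ acting against $e^{-2\Phi(\zeta)}L(d\zeta)$; but this is optional, as the isomorphism property follows purely formally from the composition.
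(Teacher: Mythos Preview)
Your proposal is correct and matches the paper's approach exactly: the paper simply recalls, in the paragraph immediately preceding the theorem, that $B$ and $T$ are Hilbert space isomorphisms onto $\mathscr{H}_B(\mathbb{C})$ and $\mathscr{H}_\Phi(\mathbb{C})=\mathscr{X}_s(\mathbb{C})$ respectively, and infers the result from the composition $B\circ T^{-1}=B\circ T^\ast$. Your added remark that one must restrict $T^\ast$ to $\mathscr{H}_\Phi(\mathbb{C})$ (where it is the genuine inverse, not merely the projection) is a welcome clarification that the paper leaves implicit.
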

Finally we will see some examples of Theorem~\ref{theorem:isomorphism100}. 
\begin{theorem}
\label{theorem:examples}
\quad
\begin{itemize}
\item 
If 
$a=i/s$, $b=-i\sqrt{1-s^2}$ and $c=is$, 
then $\mathscr{X}_s(\mathbb{C})=\mathscr{H}_\Phi(\mathbb{C})$, 
and for any $\psi\in\mathscr{X}_s(\mathbb{C})$ 
\begin{align*}
  B{\circ}T^\ast\psi(z)
& =
  \int_\mathbb{C}
  G_1(z,\zeta)
  \psi(\zeta)
  \exp\left(
      -
      \frac{1-s^2}{2s}\lvert\zeta\rvert^2
      +
      \frac{1+s^2}{4s}(\zeta^2+\bar{\zeta}^2)
      \right)
  L(d\zeta), 
\\
  G_1(z,\zeta)
& =
  \frac{\sqrt{1-s}}{2^{1/2} \pi s^{1/4}}
  \exp\left(
      \sqrt{\frac{1-s}{1+s}}z\bar{\zeta}
      +
      \frac{1-s}{4(1+s)}z^2
      -
      \frac{1-s+s^2}{2s}\bar{\zeta}^2
      \right).
\end{align*}
\item 
If 
$a=is$, $b=\sqrt{1-s^2}$ and $c=is$, 
then $\mathscr{X}_s(\mathbb{C})=\mathscr{H}_\Phi(\mathbb{C})$, 
and for any $\psi\in\mathscr{X}_s(\mathbb{C})$ 
\begin{align*}
  B{\circ}T^\ast\psi(z)
& =
  \int_\mathbb{C}
  G_2(z,\zeta)
  \psi(\zeta)
  \exp\left(
      -
      \frac{1-s^2}{2s}\lvert\zeta\rvert^2
      +
      \frac{1+s^2}{4s}(\zeta^2+\bar{\zeta}^2)
      \right)
  L(d\zeta), 
\\
  G_2(z,\zeta)
& =
  \frac{\sqrt{1-s}}{2^{1/2} \pi s^{1/4}}
  \exp\left(
      -i
      \sqrt{\frac{1-s}{1+s}}z\bar{\zeta}
      +
      \frac{1-s}{4(1+s)}z^2
      -
      \frac{1}{2}\bar{\zeta}^2
      \right).
\end{align*}
\end{itemize}
\end{theorem}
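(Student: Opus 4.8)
The plan is to produce each kernel $G_j$ by composing the explicit integral representations of $B$ and $T^\ast$ and carrying out the intermediate Gaussian integration over $\mathbb{R}$.

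First I would check that each of the two triples $(a,b,c)$ satisfies \eqref{equation:abc}. In both cases $\operatorname{Im}c=s$ and $\lvert b\rvert^2=1-s^2$, so the first equation of \eqref{equation:abc} holds at once, and a one-line evaluation of $\frac{b^2}{4\operatorname{Im}c}+\frac{a}{2i}$ (which gives $\frac{s^2-1}{4s}+\frac{1}{2s}$ in the first case and $\frac{1-s^2}{4s}+\frac{s}{2}$ in the second) confirms the second. By Theorem~\ref{theorem:whatsixs} this yields $\mathscr{X}_s(\mathbb{C})=\mathscr{H}_\Phi(\mathbb{C})$, and by Theorem~\ref{theorem:isomorphism100} the map $B\circ T^\ast$ is the asserted Hilbert space isomorphism onto $\mathscr{H}_B(\mathbb{C})$. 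The normalising constant is common to both examples, namely $C_\phi=2^{-1/2}\pi^{-3/4}\sqrt{1-s^2}\,s^{-1/4}$.

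Next I would write $T^\ast\psi$ as an integral over $\zeta\in\mathbb{C}$ and then apply $B$ as an integral over $x\in\mathbb{R}$, obtaining
\begin{equation*}
B\circ T^\ast\psi(z)
=
2^{-1/2}\pi^{-3/4}C_\phi
\int_\mathbb{C}
\left(
\int_\mathbb{R}
e^{-z^2/4+zx-x^2/2-i\overline{\phi(\zeta,x)}}\,dx
\right)
\psi(\zeta)\,e^{-2\Phi(\zeta)}\,L(d\zeta).
\end{equation*}
Reading off the quadratic phase $-i\overline{\phi(\zeta,x)}=-\tfrac{i\bar a}{2}\bar\zeta^2-i\bar b\bar\zeta x-\tfrac{i\bar c}{2}x^2$, the inner integral is of Gaussian type $\int_\mathbb{R}e^{-Ax^2+Bx+C}\,dx$ with $A=\tfrac12(1+i\bar c)$, $B=z-i\bar b\bar\zeta$ and $C=-z^2/4-\tfrac{i\bar a}{2}\bar\zeta^2$. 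Since $\operatorname{Re}A=\tfrac12(1+\operatorname{Im}c)>0$ the integral converges and equals $\sqrt{\pi/A}\,e^{C+B^2/4A}$. By \eqref{equation:Phi} the weight $e^{-2\Phi(\zeta)}$ coincides with the Gaussian factor written in the statement, so the kernel is identified as
\begin{equation*}
G_j(z,\zeta)
=
2^{-1/2}\pi^{-3/4}C_\phi\sqrt{\pi/A}\;e^{C+B^2/4A}.
\end{equation*}

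Finally I would substitute the values of $A$, $B$, $C$ for each triple. In both cases $A=(1+s)/2$, and expanding $B^2/4A$ and collecting the coefficients of $z^2$, $z\bar\zeta$ and $\bar\zeta^2$ reproduces the exponents in $G_1$ and $G_2$: the coefficient of $z^2$ comes out as $\frac{1-s}{4(1+s)}$ in both, while the $\bar\zeta^2$ terms differ because $a$ differs, giving $-\frac{1-s+s^2}{2s}$ and $-\frac12$ respectively. For the prefactor the cancellation $\sqrt{1-s^2}\,(1+s)^{-1/2}=\sqrt{1-s}$ reduces the constant to $\sqrt{1-s}/(2^{1/2}\pi s^{1/4})$ in both examples. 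I expect the only genuinely delicate point to be the justification of the interchange of integration; this follows from the decay $\lvert e^{-x^2/2-i\overline{\phi(\zeta,x)}}\rvert=\mathcal{O}\bigl(e^{-(1+\operatorname{Im}c)x^2/2}\bigr)$ in $x$ together with the square-integrability of $\psi$ against $e^{-2\Phi(\zeta)}L(d\zeta)$, the remaining steps being purely algebraic.
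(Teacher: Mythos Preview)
Your proposal is correct and follows essentially the same route as the paper: write $B\circ T^\ast$ as a double integral, evaluate the inner Gaussian in $x$ (you via the standard $\int_\mathbb{R}e^{-Ax^2+Bx+C}\,dx=\sqrt{\pi/A}\,e^{C+B^2/(4A)}$ formula, the paper via an explicit completion of the square), and simplify the resulting exponent and prefactor. Your version is slightly more systematic in that it treats both triples in parallel and explicitly notes the Fubini justification, which the paper omits.
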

\begin{proof} 
We will check only the first example. 
The second one can be proved in the same way. 
We here omit the detail. 
Suppose that $a=i/s$, $b=i\sqrt{1-s^2}$ and $c=is$. 
Then 
\begin{align*}
  T^\ast\psi(x)
& =
  \frac{\sqrt{1-s^2}}{2^{1/2}\pi^{3/4}s^{1/4}}
  \int_\mathbb{C}
  \exp\left(
      -
      \frac{1}{2s}\bar{\zeta}^2
      +
      \sqrt{1-s^2}\bar{\zeta}x
      -
      \frac{s}{2}x^2
      \right) 
\\
& \qquad \qquad \qquad 
  \times
  \psi(\zeta)
  \exp\left(
      -
      \frac{1-s^2}{2s}\lvert\zeta\rvert^2
      +
      \frac{1+s^2}{4s}(\zeta^2+\bar{\zeta}^2)
      \right)
  L(d\zeta),
  \quad
  \psi \in \mathscr{X}_s(\mathbb{C}). 
\end{align*}
We will reduce the explicit formula of $B{\circ}T^\ast\psi$. 
Applying $B$ on $T^\ast\psi$, we have 
\begin{align*}
  B{\circ}T^\ast\psi(z)
& =
  \frac{\sqrt{1-s^2}}{2\pi^{3/2}s^{1/4}}
  \int_\mathbb{C}
  \left(
  \int_\mathbb{R}
  \exp\left(
      zx
      -
      \frac{1}{2}x^2
      +
      \sqrt{1-s^2}\bar{\zeta}x
      -
      \frac{s}{2}x^2
      \right)
  dx
  \right)
\\
& \times
  \exp\left(
      -\frac{1}{4}z^2-\frac{1}{2s}\bar{\zeta}^2
      \right)
  \psi(\zeta)
  \exp\left(
      -
      \frac{1-s^2}{2s}\lvert\zeta\rvert^2
      +
      \frac{1+s^2}{4s}(\zeta^2+\bar{\zeta}^2)
      \right)
  L(d\zeta). 
\end{align*}
Since  
\begin{align*}
& zx
  -
  \frac{1}{2}x^2
  +
  \sqrt{1-s^2}\bar{\zeta}x
  -
  \frac{s}{2}x^2 
\\
  =
& -
  \frac{1+s}{2}
  \left\{
  x
  -
  \frac{1}{1+s}
  (z+\sqrt{1-s^2}\bar{\zeta})
  \right\}^2
  +
  \frac{1}{2(1+s)}
  (z+\sqrt{1-s^2}\bar{\zeta})^2, 
\end{align*}
we deduce that 
\begin{align*}
  B{\circ}T^\ast\psi(z)
& =
  \frac{\sqrt{1-s^2}}{2\pi^{3/2}s^{1/4}}
  \int_\mathbb{C}
  \left(
  \int_\mathbb{R}
  \exp\left(
      -
      \frac{1+s}{2}
      \left\{
      x
      -
      \frac{1}{1+s}
      (z+\sqrt{1-s^2}\bar{\zeta})
      \right\}^2
      \right)
  dx
  \right)
\\
& \times
  \exp\left(
      -
      \frac{1}{4}z^2-\frac{1}{2s}\bar{\zeta}^2
      -
      \frac{1}{2(1+s)}
      (z+\sqrt{1-s^2}\bar{\zeta})^2
      \right)
\\
& \times
  \psi(\zeta)
  \exp\left(
      -
      \frac{1-s^2}{2s}\lvert\zeta\rvert^2
      +
      \frac{1+s^2}{4s}(\zeta^2+\bar{\zeta}^2)
      \right)
  L(d\zeta)
\\
& =
  \frac{\sqrt{1-s^2}}{2\pi^{3/2}s^{1/4}}
  \int_\mathbb{C}
  \left(
  \int_\mathbb{R}
  \exp\left(
      -
      \frac{1+s}{2}x^2
      \right)
  dx
  \right)
\\
& \times
  \exp\left(
      \sqrt{\frac{1-s}{1+s}}z\bar{\zeta}
      +
      \frac{1-s}{4(1+s)}z^2
      -
      \frac{1-s+s^2}{2s}\bar{\zeta}^2
      \right)
\\
& \times
  \psi(\zeta)
  \exp\left(
      -
      \frac{1-s^2}{2s}\lvert\zeta\rvert^2
      +
      \frac{1+s^2}{4s}(\zeta^2+\bar{\zeta}^2)
      \right)
  L(d\zeta) 
\\
& =
  \frac{\sqrt{1-s}}{2^{1/2}\pi s^{1/4}}
  \int_\mathbb{C}
  \exp\left(
      \sqrt{\frac{1-s}{1+s}}z\bar{\zeta}
      +
      \frac{1-s}{4(1+s)}z^2
      -
      \frac{1-s+s^2}{2s}\bar{\zeta}^2
      \right)
\\
& \times
  \psi(\zeta)
  \exp\left(
      -
      \frac{1-s^2}{2s}\lvert\zeta\rvert^2
      +
      \frac{1+s^2}{4s}(\zeta^2+\bar{\zeta}^2)
      \right)
  L(d\zeta).  
\end{align*}
This completes the proof.
\end{proof}
%
%
\begin{center}
{\sc Acknowledgements} 
\end{center}
\par
The author is very grateful to Professor Franciszek Hugon Szafraniec 
for being interested in the manuscript \cite{chihara2} 
and for teaching the author recent topics on holomorphic Hermite functions kindly. 
This was a chance to write the present paper since the author knew nothing about them. 

\end{document}